\documentclass[12pt]{amsart}
\usepackage{amscd}
\usepackage{amsfonts}
\usepackage[dvipsnames]{xcolor}
\usepackage{amsmath,amssymb,amsthm} 
\usepackage[compatible]{algpseudocode}
\usepackage{algorithm}

\usepackage{url}
\usepackage{cite}

\usepackage{tikz}
\usepackage[all]{xy}
\usepackage{enumitem} 

\newtheorem{thm}{Theorem}[section]
\newtheorem{cor}[thm]{Corollary}

\theoremstyle{definition}
\newtheorem{ex}[thm]{Example}
\newtheorem{lem}[thm]{Lemma}

\theoremstyle{definition}
\newtheorem{defn}[thm]{Definition}

\numberwithin{equation}{section}

\def\P{\mathbb{CP}} 
\def\N{\mathbb{N}} 
\def\Z{\mathbb{Z}} 
\def\R{\mathbb{R}} 
\def\C{\mathbb{C}} 
\def\O{\mathcal{O}} 

\def\F{\mathcal{F}}
\newcommand{\XX}{{\mathcal X}}
\newcommand{\pa}[1]{\frac{\partial}{\partial #1}}

\definecolor{miazul}{rgb}{0.60,0.75,0.90}
\definecolor{verdeazul}{rgb}{0.30,0.80,0.30}

\usepackage{pgf,tikz}
\usetikzlibrary{arrows}

\definecolor{wwffww}{rgb}{0.4,1,0.4}
\definecolor{ffttzz}{rgb}{1,0.2,0.6}
\definecolor{ffttqq}{rgb}{1,0.2,0}
\definecolor{qqzzzz}{rgb}{0,0.6,0.6}
\definecolor{qqqqff}{rgb}{0,0,1}
\definecolor{qqccww}{rgb}{0,0.8,0.4}
\definecolor{uququq}{rgb}{0.25,0.25,0.25}
\definecolor{ffqqww}{rgb}{1,0,0.4}

\makeatother

\begin{document}     

\title[Projective foliations]{Projective foliations with a unique singular point and maximum algebraic  multiplicity}
 


\author{Jorge Mozo-Fern\'{a}ndez}
\address[Jorge Mozo Fern\'{a}ndez]{Dpto. \'{A}lgebra, An\'{a}lisis Matem\'{a}tico, Geometr\'{\i}a y Topolog\'{\i}a and Instituto de Investigación en Matemáticas - UVa (IMUVA)\\
Facultad de Ciencias, Universidad de Valladolid \\
Campus Miguel Delibes\\
Paseo de Bel\'{e}n, 7\\
47011 Valladolid - Spain}
\email{jorge.mozo@uva.es}

\thanks{ Both authors supported by Agencia Estatal de Investigación, of  Ministerio de Ciencia, Innovación y Universidades, under Project  \textit{Análisis Asintótico, Álgebra y Geometría en Sistemas Dinámicos}, Ref. PID2022-139631NB-I00, and GIR ECSING of the University of Valladolid. Financial support of the Department of Education of the Junta de Castilla y León and FEDER Funds is gratefully acknowledged (Reference: CLU-2025-1-02- IMUVA)}

\author{P. Rubí Pantaleón-Mondragón}
\address[P. Rubí Pantaleón-Mondragón]{Facultad de Ciencias de la UNAM\\
 C.U. Coayacán, C.P. 04510. Ciudad de México, México \\}
\email{pantaleon@ciencias.unam.mx}

\keywords{Holomorphic foliations, Stability, Singularities, Invariant line. }

\begin{abstract}
The study of codimension-one holomorphic foliations on the projective plane $\P^{2}$ with a unique singular point has attracted increasing interest because of its connections with several important problems. In this paper, we focus on holomorphic foliations of degree $d$ having a singular point of maximal multiplicity $d$. We describe explicitly the foliations in this class and characterize those having a unique singular point. We also state some results concerning Hilbert-Mumford stability for these foliations.
\end{abstract}

\subjclass{32S65, 14B05, 37F75, 32M25, 14L30.}

\maketitle

\section{Introduction} 
A codimension-one holomorphic foliation $\F$ of degree $d$ on the complex projective plane can be described in two equivalent ways:
\begin{enumerate}
\item By a homogeneous vector field
$$
\XX= A(x,y,z)\pa{x}+ B(x,y,x)\pa{y} + C(x,y,z)\pa{z},
$$
where $A$, $B$, $C\in \C [x,y,z]$ are homogeneous polynomials of the same degree $d$. This vector field describing the foliation is not unique: if $G(x,y,z)$ is a homogeneous polynomial of degree $d-1$ and $\mathcal{R}=x\pa{z}+y\pa{y}+z\pa{z}$ is the radial vector field, both $\XX$ and $\XX + G\cdot \mathcal{R}$ describe the same foliation and conversely. As a matter of notation, when describing a foliation with a vector field $\XX$, we will sometimes write concisely
$$
\XX=\begin{pmatrix} A \\ B \\ C \end{pmatrix}.$$

\item By a homogeneous 1-form $\omega=P(x,y,z)dx+ Q(x,y,z) dy+ R(x,y,x) dz$, with $P$, $Q$ and $R$ homogeneous of degree $d+1$ satisfying Euler's condition $xP+yQ+zR=0$.
\end{enumerate}

Both expressions for a holomorphic foliation are related as follows. If $\F$ is \mbox{described} by a vector field $\XX$ as above, the 1-form defining the foliation can be written as
$$
\begin{vmatrix}
dx & dy & dz \\ x & y & z \\ A & B & C
\end{vmatrix}= (yC-zB)dx+(zA-xC)dy+(xB-yA)dz.
$$
We denote by $\F(d;2)$ the space of degree $d$  foliations on $\P^{2}$, which turns out to be a projective space of dimension $d^{2}+4d+2$.
We shall assume that $\F$ has isolated singularities, which means that the coefficients have no common factors. It is known that such a foliation is determined by its singular subscheme $Sing(\F)$ if $d>1$ \cite{GM-K,CO1992,CO}. 
Without loss of generality, we can assume that $p=[1:0:0]$ is a singular point of $\F$, then we can consider a local representation of $\F$ over an affine chart of $\P^{2}$ having $p$ as the origin. This local representation is given by a holomorphic $1$-form  \[\omega_{\F}=f(y,z)dy+g(y,z)dz,\]
where $f(y,z)$, $g(y,z)$ are polynomials. For convenience, sometimes we will denote the foliation $\F$ by $\omega$ instead of $\F$ or $\omega_{\F}$, when there is no risk of confusion.

A projective algebraic curve defined by a reduced homogeneous polynomial $F$ is said to be invariant by the foliation $\F$ if the vector field defining the foliation is tangent to $F$ in the regular points. When $\F$ is defined by a vector field $\XX$, this means that $\XX (F)$  is a multiple of $F$. In the language of $1-$forms, $\omega \wedge dF=F\cdot\eta$, where $\eta$ is a $2-$form.

The study of global properties of foliations makes use of several numbers associated to the singularities, which turn out to be invariants under (local) analytic diffeomorphisms. The most useful invariants we shall make use in this work are:

\begin{itemize}
    \item The {\bf Milnor number} of $\omega$ in $p$, defined as \[\mu_{p}(\omega)=\dim_{\C}\frac{\O_{\C^{2},(0,0)}}{\langle f,g\rangle\cdot\O_{\C^{2},(0,0)}},\] 
    \end{itemize}
    where $\O_{\C^{2},(0,0)}$ denotes the local ring of regular functions at $(0,0)$.
    
    Let us observe that  Milnor number is nothing but the intersection index of $f$ and $g$ at $(0,0)$, which we will denote in the sequel by $I_{0}(f,g)$.
    \begin{itemize}
    \item If we consider the decomposition of $f$ and $g$ into homogeneous components, namely 
    \begin{align*}
        f=& f_{m} +f_{m+1}+\cdots,\\
        g=& g_{s}+g_{s+1}+\cdots
    \end{align*}
    then, the {\bf algebraic multiplicity} of $\omega$ at $p$ is defined as \[m_{p}(\omega):=\min\{m,s\}.\]
\end{itemize}

In \cite{J06}, J.-P. Jouanolou shows that
\[\sum\limits_{p\in Sing(\omega)}\mu_{p}(\omega)=d^{2}+d+1.\] 

This means in particular that there always exist singularities of a projective foliation, and that the number of such singularities is bounded above by $d^2+d+1$. We denote by $\mathcal{A}_d$ the set of degree $d$ foliations having only one singular point, and $\mathcal{A}_{d,m}$ the subset of $\mathcal{A}_d$ consisting of foliations such that the singular point has algebraic multiplicity $m$. More precisely, 
\[\mathcal{A}_{d}~:=\{\F\in \F(d;2)~:~Sing(\F)=\{p\},~\mu_{p}(\F)=d^2+d+1 \},\]
\[\mathcal{A}_{d,m}~:= \{ \F\in \mathcal{A}_{d}~:~ m_{p}(\F)=m\}.\]

In \cite{AP19}, the authors proved that $\dim \mathcal{A}_{d}\geq 3d+2$, for $d>1$. The case that has received the most attention is of $\mathcal{A}_{d,1}$; in this case, the singular point may be either a nilpotent singularity or a saddle-node, and such foliations do not have  invariant algebraic curves, which is an important property for the study of foliations \cite{AP19,L22}. Note that these examples are among the few known explicit examples of foliations without invariant algebraic curves. However, there are further questions regarding these  cases, such as the number of foliations, up to change of coordinates, that is, the study of strata with Hilbert-Mumford stability \cite{CDGM,AlcantaraGrado2, A22}. 

Another reason which motivates the study of the set $\mathcal{A}_{d}$ stems from a recent work by D. Cerveau and J. Déserti \cite{cerveaudeserti} which guarantees that under a Cremona transformation, every foliation of degree $d$ on $\P^{2}$ can be mapped to one with a single singularity, so it is birational to it. Unfortunately, the degree of the new foliation may be very high, see  \cite{alcantara2024remarks} for a concrete example.

In this paper, we will focus in the opposite case to the one stated before, i.e. when the algebraic multiplicity of the singular point is maximal: in other words we will be interested in foliations belonging to the set $\mathcal{A}_{d,d}$.

The structure of the paper is as follows: in Subsection \ref{Sub:una singularidad} we characterize  foliations of degree $d$ with a singularity of algebraic multiplicity $d$ and Milnor number $d^{2}+d+1$. Due to Jouanolou's result, these foliations have only one singular point. We will prove that they have at most $d$ invariant lines. In Subsection \ref{Sub:mas puntos}, we analyze the behavior of foliations with a singularity of Milnor number at most $d^{2}+d$ and algebraic multiplicity $d$. We will present an example involving two singular points, a  result previously studied in the paper \cite{R26}. In this  example, we apply a suitable Cremona transformation to analyze the degree obtaining elements in $\mathcal{A}_{s,\frac{s+1}{2}}$ for odd $s$.  We conclude in Section \ref{stability} with some issues related to Hilbert-Mumford stability. 

Notation: in this text, given a polynomial $P$ and a monomial $\mathbf{m}$ we will denote by $C(\mathbf{m},P)$  the coefficient of the monomial  $\mathbf{m}$ in the polynomial $P$.

\section{Results: $m_{p}(\omega)=d$}

\subsection{Foliation with a unique singular point}\label{Sub:una singularidad}

In this section, we consider foliations $\F$ of degree $d$ on $\P^{2}$ having a singular point $p$ of algebraic multiplicity $d$. Without loss of generality, we may assume that $p=[1:0:0]$. We also assume that the foliation is non-dicritical, meaning that only finitely many invariant analytic curves (separatrices) pass through each singular point. In addition, we may assume that the line at infinity $x=0$ is not invariant under $\F$. Under these conditions, any such foliation can be represented locally around $p$ by the $1$-form
\begin{equation}\label{Fol:d:d}
\omega=P_{d}dy+Q_{d}dz+R_{d}(ydz-zdy),
\end{equation}
where $P_{d},Q_{d},R_{d}\in\C[y,z]_{d}$ are homogeneous polynomials of degree $d$.

Let $S(y,z)=yP_{d}+zQ_{d}$ be the equation of the tangent cone at $p$. This is a homogeneous polynomial of degree $d+1$, and it is not identically zero because $\F$ is non-dicritical. Let $M=by-az$ be a linear form, with $a,b\in\C$. A straightforward computation shows that $M$ defines an invariant line for $\omega$ if and only if
\begin{align}
S(a,b)=aP_{d}(a,b)+bQ_{d}(a,b)=0,
\end{align}
that is, if and only if $M$ is a factor of $S(y,z)$. In Corollary \ref{atmostdlines}, we will see that there are at most $d$ distinct invariant lines when the foliation $\omega$ has a unique singular point.

Let us write $S(y,z)$ as a product of $d+1$ (not necessarily distinct) linear factors $M_{k}\in \C[y,z]$. Thus,
\begin{align}\label{Eq:S}
    S(y,z)=\prod_{k=0}^{r}M_{k}^{m_{k}},
\end{align}
where $m_{k}\in\N$, $\sum_{k=0}^{r}m_{k}=d+1$, and $M_{i}\neq M_{j}$ for $i\neq j$.

\begin{thm}\label{Teo:d:d}
With the notation in \eqref{Fol:d:d}, a foliation defined by a $1$-form $\omega$ belongs to $\mathcal{A}_{d,d}$ if and only if all the factors $M_{k}$ of $S(y,z)$ divide both $P_{d}$ and $Q_{d}$.
\end{thm}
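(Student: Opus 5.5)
The plan is to use Jouanolou's formula $\sum_{q}\mu_q(\omega)=d^2+d+1$. By that formula, $\F\in\mathcal{A}_{d,d}$ exactly when $p=[1:0:0]$ is the only singular point, since $m_p(\omega)=d$ holds automatically for the normal form \eqref{Fol:d:d}. So I will avoid computing $I_0(f,g)$ directly and instead locate all singular points of $\F$ other than $p$. I will show that they lie on the lines $M_k=0$, and that such a point exists precisely when some $M_k$ fails to divide both $P_d$ and $Q_d$.

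\emph{Affine singularities.} In the chart $x=1$ we have
\[
\omega=(P_d-zR_d)\,dy+(Q_d+yR_d)\,dz .
\]
A point $q\neq 0$ can be written $q=t(a,b)$ with $t\neq 0$. By homogeneity, after dividing by $t^d$ the singularity conditions become
\[
P_d(a,b)=t\,b\,R_d(a,b),\qquad Q_d(a,b)=-t\,a\,R_d(a,b).
\]
Multiplying by $a$ and $b$ and adding gives $S(a,b)=0$, so $(a,b)$ is the direction of some factor $M_k$. Suppose $M_k$ divides both $P_d$ and $Q_d$. Then the conditions force $t\,R_d(a,b)=0$, hence $R_d(a,b)=0$. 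In that case $M_k$ divides $P_d$, $Q_d$ and $R_d$, so it divides both coefficients of $\omega$. This contradicts the hypothesis that the singularities are isolated. Conversely, suppose $M_k=by-az$ does not divide both $P_d$ and $Q_d$. Since $aP_d(a,b)+bQ_d(a,b)=0$, we get $(P_d(a,b),Q_d(a,b))=\lambda(b,-a)$ with $\lambda\neq0$. If $R_d(a,b)\neq 0$, then $t=\lambda/R_d(a,b)$ gives an affine singular point different from $p$.

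\emph{Singularities at infinity.} Here I would write the associated affine vector field
\[
X=(Q_d+yR_d)\,\partial_y-(P_d-zR_d)\,\partial_z
\]
and extend it projectively. Its top-degree part is $R_d\cdot(y\partial_y+z\partial_z)$ up to sign and orientation conventions, i.e.\ $R_d$ times the radial field. That is consistent with $x=0$ not being invariant. Computing in a chart at infinity, for instance $y=1$ with coordinates $(x,z)$, I expect the following: $[0:a:b]$ is singular if and only if $R_d(a,b)=0$ and the degree-$d$ part is parallel to the radial direction there. The second condition is $S(a,b)=0$. This is the one genuinely computational step and the main obstacle: getting the chart at infinity right and verifying exactly this criterion, including that no extra condition is needed for the point to be singular.

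\emph{Conclusion.} Granting that criterion, the two directions follow. If every $M_k$ divides $P_d$ and $Q_d$, a singular point at infinity would give a common factor $M_k$ of $P_d$, $Q_d$, $R_d$, again contradicting isolated singularities. With the affine analysis, this shows $\mathrm{Sing}(\F)=\{p\}$, so $\F\in\mathcal{A}_{d,d}$. If instead some $M_k$ fails to divide $P_d$ or $Q_d$, then either $R_d(a,b)\neq0$ and we get an affine singular point on $M_k=0$, or $R_d(a,b)=0$ and $[0:a:b]$ is singular at infinity. In both cases $p$ is not the unique singular point, so $\mu_p<d^2+d+1$ and $\F\notin\mathcal{A}_{d,d}$.
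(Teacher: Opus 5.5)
Your proof is correct, and it follows a different route from the paper's. The paper never locates the other singular points; it computes $\mu_p$ locally. It introduces $\tilde\mu=I_0\bigl(y(P_d-zR_d),z(Q_d+yR_d)\bigr)$ and notes that the two functions add up to $S$. Hence $\tilde\mu=I_0\bigl(S,z(Q_d+yR_d)\bigr)=\sum_k m_k\, I_0\bigl(M_k,z(Q_d+yR_d)\bigr)$, with each intersection index equal to $d+1$ or $d+2$. Subtracting $I_0(y,z)+I_0(y,Q_d)+I_0(P_d,z)=2d+1$ gives $\mu_p\le d^2+d+1$, with equality exactly when every $M_k$ divides $P_d$ and $Q_d$. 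Jouanolou's formula then turns maximality of $\mu_p$ into uniqueness of the singular point.

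That computation needs the normalization $M_k\neq y,z$ and a separate case when some $M_k$ is a coordinate axis. In exchange it gives the exact value $\mu_p=d^2+\sum_{M_k\mid P_d,\,Q_d}m_k$, not just the equality case. Your global argument avoids intersection multiplicities and all case distinctions, since the parametrization $t(a,b)$ treats every direction uniformly. It needs Jouanolou only to pass from $\mathrm{Sing}(\F)=\{p\}$ to $\mu_p=d^2+d+1$. As a by-product it gives the existence part of Lemma~\ref{Lemma:Mk-noFactor}. Your equations in fact show that each line $M_k$ not dividing both $P_d$ and $Q_d$ carries exactly one further singular point: the affine point $t(a,b)$ with $t=\lambda/R_d(a,b)$, or $[0:a:b]$.

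The step you flagged as the main obstacle closes at once if you homogenize instead of changing charts. The projective form that restricts to $\omega$ on $x=1$ is
\[
\Omega=-S\,dx+(xP_d-zR_d)\,dy+(xQ_d+yR_d)\,dz .
\]
It satisfies Euler's condition, and $\mathrm{Sing}(\F)$ is the common zero set of its three coefficients. On $x=0$ these coefficients are $-S(a,b)$, $-bR_d(a,b)$ and $aR_d(a,b)$. So $[0:a:b]$ is singular if and only if $S(a,b)=R_d(a,b)=0$, which is exactly your criterion; compare the chart $z=1$ expression in the proof of Lemma~\ref{Lemma:Mk-noFactor}. You should also state explicitly that $S\not\equiv 0$ forces $(P_d,Q_d)\neq(0,0)$, since this is what makes $m_p(\omega)=d$ automatic.
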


\begin{proof}
Let
\[
\omega=P_{d}dy+Q_{d}dz+R_{d}(ydz-zdy),
\]
and assume that $p=[1:0:0]\in Sing(\omega)$ with $m_{p}(\omega)=d$. Without loss of generality, after a projective change of coordinates, we may assume that $M_k\neq y,z$.

Consider the intersection index
\[
\tilde{\mu}=I_{0}(y(P_{d}-zR_{d}),z(Q_{d}+yR_{d})).
\]
Then
\[
\tilde{\mu}= I_{0}(S(y,z), z(Q_{d}+yR_{d}))
= \sum_{k=0}^{r}m_{k}I_{0}(M_{k},z(Q_{d}+yR_{d})).
\]
By elementary properties of the intersection index,
\begin{equation*}
I_{0}(M_{k},z(Q_{d}+yR_{d})) =
\left\{
\begin{array}{ll}
d+1 & \mbox{if } M_{k}\nmid Q_{d},\\
d+2 & \mbox{if } M_{k}\mid Q_{d}.
\end{array}
\right.
\end{equation*}

Observe that, since the singularities are isolated, the condition $M_k\mid Q_d$ implies that $M_{k}\nmid R_{d}$.

Consequently,
\[
\tilde{\mu}\leq \sum_{k=0}^{r}m_{k}(d+2)=(d+2)(d+1).
\]
On the other hand,
\begin{align*}
\tilde{\mu}
&= I_{0}(y,z)+I_{0}(y,Q_{d})+I_{0}(P_{d},z)+\underbrace{I_{0}(P_{d}-zR_{d},Q_{d}+yR_{d})}_{\mu} \\
&= 1+2d+\mu,
\end{align*}
so
\[
\mu=\tilde{\mu}-2d-1\leq d^{2}+d+1.
\]
We conclude that $\mu=d^2+d+1$ if and only if, for every $k\in \{0,1,\ldots,r\}$, the factor $M_k$ divides both $P_d$ and $Q_d$.

A similar computation can be carried out if, for instance, $M_0=y$. In this case, we may write $Q_d=yQ_{d-1}$, where $Q_{d-1}$ is homogeneous of degree $d-1$. Then
\begin{align*}
\mu
&= I_{0}(P_d-zR_d, Q_d+yR_d) \\
&= I_{0}(P_d-zR_d, y)+ I_{0}(P_d+zQ_{d-1}, Q_{d-1}+R_d) \\
&= I_{0}(P_d-zR_d, y) + (m_0-1)I_{0}(y, Q_{d-1}+R_d) + \sum_{k=1}^{r} I_{0}(M_k, Q_{d-1}+R_d).
\end{align*}
The first term takes the value $d$ or $d+1$, depending on whether $y\nmid P_d$ or $y\mid P_d$. Similarly, the other terms take the value $d-1$ or $d$. Therefore,
\[
\mu\leq d+1+ \left( \sum_{k=0}^{r} m_k -1 \right) d = d^2+d+1.
\]
Equality holds if and only if $y$ and the remaining linear factors $M_k$ divide both $P_d$ and $Q_d$.
\end{proof}

\begin{cor}\label{atmostdlines}
A foliation $\omega\in \mathcal{A}_{d,d}$ has at most $d$ invariant lines. In fact, assume that $S(y,z)$ has $d$ distinct factors $\{ M_k\}_{k=0}^{d-1}$. Then $\omega \in \mathcal{A}_{d,d}$ if and only if
\[
P_{d}=a\prod_{k=0}^{d-1}M_{k},
\qquad
Q_{d}=b\prod_{k=0}^{d-1}M_{k},
\]
with $a,b\in\C$, $(a,b)\neq (0,0)$, and $ay+bz=M_{k}$ for some $k$ such that $m_{k}=2$.
\end{cor}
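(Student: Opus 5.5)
Everything here follows from Theorem \ref{Teo:d:d}, which says $\omega\in\mathcal{A}_{d,d}$ exactly when every distinct linear factor $M_k$ of $S=yP_d+zQ_d$ divides both $P_d$ and $Q_d$.

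\emph{First claim.} Invariant lines through $p$ are precisely the distinct factors $M_k$ of $S$. By Theorem \ref{Teo:d:d}, all of them divide $P_d$. They are pairwise non-associated linear forms, so their product $\prod_{k=0}^{r}M_k$ also divides $P_d$, and likewise $Q_d$. The polynomials $P_d,Q_d$ are not both zero, since otherwise $S=0$, contradicting non-dicriticity. Comparing degrees then gives $r+1\le d$. Hence there are at most $d$ distinct invariant lines through $p$.

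We also need to exclude invariant lines not passing through $p$. Such a line would contain a singular point of $\F$ by the usual index computation: a degree-$d$ foliation restricted to a non-invariant line has $d$ tangencies, whereas an invariant line carries $d+1$ singularities counted with multiplicity. Since $Sing(\F)=\{p\}$, every invariant line passes through $p$, so the bound holds for all invariant lines.

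\emph{Second claim, forward direction.} Suppose $S$ has exactly $d$ distinct factors $M_0,\dots,M_{d-1}$ and $\omega\in\mathcal{A}_{d,d}$. The argument above shows that $\Pi:=\prod_{k=0}^{d-1}M_k$, which has degree $d$, divides both $P_d$ and $Q_d$. Therefore $P_d=a\Pi$ and $Q_d=b\Pi$ with $a,b\in\C$, and $(a,b)\neq(0,0)$. It follows that $S=(ay+bz)\Pi$. Since $\sum m_k=d+1$ with $d$ distinct factors, exactly one multiplicity $m_k$ equals $2$ and the others equal $1$. The extra factor $ay+bz$ is nonzero and must therefore be, up to a scalar, the factor $M_k$ with $m_k=2$. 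Normalising $M_k$ absorbs the scalar.

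\emph{Second claim, converse.} If $P_d=a\Pi$ and $Q_d=b\Pi$ with $ay+bz=M_k$, then $S=M_k\Pi$. Its distinct factors are exactly $M_0,\dots,M_{d-1}$, each of which divides $P_d$ and $Q_d$. Theorem \ref{Teo:d:d} then gives $\omega\in\mathcal{A}_{d,d}$. This uses the standing hypotheses that the singularities are isolated and that $p$ has multiplicity $d$; these are implicit in the form \eqref{Fol:d:d} and are assumed by the theorem.

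The main obstacle is the first claim, specifically lines not passing through $p$. If the statement is read as referring only to lines through $p$, the degree count alone suffices. Otherwise the tangency/index argument described above has to be invoked.
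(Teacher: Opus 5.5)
Your proof is correct and follows the same route as the paper: Theorem~\ref{Teo:d:d} forces the product of the distinct factors of $S$ to divide both $P_d$ and $Q_d$, and a degree count then gives the bound and, when there are $d$ distinct factors, the explicit form of $P_d$ and $Q_d$. You also make explicit two points the paper's short proof leaves implicit: that every invariant line passes through the unique singular point $p$ (since an invariant line of a degree-$d$ foliation carries $d+1$ singularities counted with multiplicity), and that the converse relies on the standing hypothesis of isolated singularities.
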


\begin{proof}
Indeed, by Theorem \ref{Teo:d:d}, if $\omega\in \mathcal{A}_{d,d}$, then every factor of $S(y,z)$ must divide both $P_d(y,z)$ and $Q_d(y,z)$, which proves the first assertion. If $S$ has $d$ distinct factors, then these determine $P_d$ and $Q_d$ up to nonzero constants, which proves the second statement.
\end{proof}

\begin{ex}
Consider
\[
\omega=P_{3}dy+Q_{3}dz+R_{3}(ydz-zdy),
\]
where
\begin{align*}
P_{3} &= (z-y)(z-2y)(z+y),\\
Q_{3} &= (z-y)(z-2y)(z-3y),\\
R_{3} &= y^{3}.
\end{align*}
Then
\[
S(y,z)=yP_{3}+zQ_{3}=(z-y)^{3}(z-2y).
\]
Thus, using the previous notation, $\tilde{\mu}=20$ and
\[
\mu=\tilde{\mu}-(1+2\cdot 3)=13.
\]
\end{ex}

\subsection{Foliation with more singular points}\label{Sub:mas puntos}

In the sequel, we study the nature of the other singular points of the foliation, provided that they exist; that is, when some factor $M_k$ of the tangent cone does not divide either $P_d$ or $Q_d$.

\begin{lem}\label{Lemma:Mk-noFactor}
Let $S(y,z)=\prod_{k=0}^{r}M_{k}^{m_{k}}$ as in \eqref{Eq:S}. If $M_{k}$ is not a factor of either $P_{d}$ or $Q_{d}$ in the foliation $\omega$ given by \eqref{Fol:d:d}, then there exists a unique singular point of $\omega$ on the line $M_{k}$ different from $p=[1:0:0]$, with algebraic multiplicity $1$ and Milnor number $m_{k}$.
\end{lem}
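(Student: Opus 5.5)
We work in the affine chart $x=1$, where $p$ is the origin and $\omega$ is given by \eqref{Fol:d:d}. First we normalize the line. A linear change of the coordinates $(y,z)$ with determinant $\delta$ keeps the form \eqref{Fol:d:d}: $P_d,Q_d$ transform as the components of a covector, and $R_d$ is multiplied by $\delta$, since $y\,dz-z\,dy$ scales by $\delta$. So we may assume $M_k=z$.

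Then $z\mid S=yP_d+zQ_d$ forces $z\mid P_d$. Write $P_d=zP'_{d-1}$. The hypothesis that $M_k$ does not divide both $P_d$ and $Q_d$ therefore means $z\nmid Q_d$, i.e. $c:=Q_d(1,0)\neq 0$. Put $r:=R_d(1,0)$. Also
\[
S=z\,(yP'_{d-1}+Q_d),
\]
so $yP'_{d-1}+Q_d=z^{m_k-1}T$ with $T$ homogeneous and $T(1,0)\neq 0$.

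Next we locate the singular points on the line. On $\{z=0\}$ the $dy$-coefficient $P_d-zR_d=z(P'_{d-1}-R_d)$ vanishes identically. The $dz$-coefficient restricts to $Q_d(y,0)+yR_d(y,0)=y^d(c+ry)$. Homogenizing along the projective line $\{z=0\}$, which carries coordinates $[x:y]$, the singular points there are the zeros of the binary form $y^d(cx+ry)$ of degree $d+1$. This form has the root $p$ with multiplicity $d$ and exactly one further root $q=[r:-c:0]$, and $q\neq p$ because $c\neq 0$.

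If $r\neq 0$, then $q=(y_0,0)$ with $y_0=-c/r\neq0$ lies in the chart. If $r=0$, then $q=[0:1:0]$ lies at infinity, and I would repeat the computation in the chart $y=1$. Alternatively, I would make the argument chart-free by working with the homogeneous vector field; this bookkeeping is the step I expect to need the most care. The fact that $q$ is the only other zero already gives the uniqueness claim.

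Now the local invariants at $q$, in the case $r\neq0$. Set $G=Q_d+yR_d$. Its partial derivative $\partial_y G(y_0,0)=\frac{d}{dy}\bigl(y^d(c+ry)\bigr)\big|_{y_0}=ry_0^d$ is nonzero. So the linear part of $\omega$ at $q$ is nonzero, and the algebraic multiplicity is $1$. Moreover $\{G=0\}$ is smooth at $q$ and can be parametrized as $y=\varphi(z)$ with $\varphi(0)=y_0$. Hence
\[
\mu_q=I_q\bigl(z(P'_{d-1}-R_d),G\bigr)=1+\operatorname{ord}_{z}\,(P'_{d-1}-R_d)(\varphi(z),z).
\]
On the curve $G=0$ we have $Q_d=-yR_d$. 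Hence
\[
y\,(P'_{d-1}-R_d)=yP'_{d-1}+Q_d=z^{m_k-1}T.
\]
Near $q$ both $y$ and $T$ are units, since $y_0\neq0$ and $T(y_0,0)=y_0^{\,d+2-m_k}T(1,0)\neq 0$. So the order is exactly $m_k-1$, and $\mu_q=m_k$. The case $r=0$, treated in the other chart, should give the same computation.
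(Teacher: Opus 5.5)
Your proof is correct, and its skeleton matches the paper's. Both normalize $M_k$ to a coordinate line (the paper takes $y$, you take $z$). On that line one coefficient of $\omega$ vanishes identically, and the other restricts to $z^d$, respectively $y^d$, times a factor of degree at most one. Both then split according to whether the extra zero is finite or at infinity.

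You differ in the local computation at $q$. The paper, after a projective change fixing $p$ and the line, always puts $q$ at the point at infinity of the line and works in that chart. There the foliation reads $-S(y,1)\,dx+(xP_d(y,1)-R_d(y,1))\,dy$, so the tangent cone itself is a coefficient. Then $S(y,1)=y^{m_k}U(y)$ with $U(0)\neq 0$ gives $\mu=m_k\,I_0(y,xP_d(0,1))=m_k$ in one line, and the term $xP_d(0,1)$ gives multiplicity one. You instead stay in the chart centred at $p$, parametrize the smooth curve $\{G=0\}$ by the implicit function theorem, and use the identity $y(P'_{d-1}-R_d)=z^{m_k-1}T$ on it. This is valid and needs no coordinate change, and your degree-$(d+1)$ binary form makes uniqueness explicit, which the paper leaves implicit. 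The price is that your computation does not cover $r=0$, so you still need the paper's chart.

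That case is short, and you should write it out rather than say it ``should give the same computation''. In your normalization the chart $y=1$ gives $-S(1,z)\,dx+(xQ_d(1,z)+R_d(1,z))\,dz$, with $S(1,z)=z^{m_k}T(1,z)$ and $T(1,0)\neq 0$. So when $r=R_d(1,0)=0$ you get $\mu=m_k\,I_0(z,cx)=m_k$, and the linear part contains $c\,x$. The same formula settles your bookkeeping concern. At $[0:1:0]$ the $dx$-coefficient vanishes and the $dz$-coefficient equals $r$, so this point is singular exactly when $r=0$. This justifies the homogenized count $y^d(cx+ry)$, and hence uniqueness, also when $r\neq 0$. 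One harmless slip: $\deg T=d+1-m_k$, so $T(y_0,0)=y_0^{d+1-m_k}T(1,0)$.
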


\begin{proof}
After a change of coordinates, we may assume that $M_k=y$ and that $y$ is not a factor of $P_d$. Since
\[
P_d(0,z)-zR_d(0,z)=z^d\bigl(C(z^d,P_d)-zC(z^d,R_d)\bigr),
\]
we distinguish two cases.

If $C(z^d,R_d)\neq 0$, let
\[
z_k=\frac{C(z^d,P_d)}{C(z^d,R_d)}\neq 0.
\]
Then the point $q_k=[1:0:z_k]$ is singular.

If $C(z^d,R_d)=0$, we must consider the point at infinity $q_k=[0:0:1]$. In the chart $z=1$, the foliation is defined by the $1$-form
\[
-S(y,1)\,dx+(xP_{d}(y,1)-R_{d}(y,1))\,dy.
\]
Since $R_d(0,1)=C(z^d,R_d)=0$, the point $q_k$ is singular.

The Milnor number can be computed under these assumptions, after possibly applying a projective transformation. Since
\[
S(y,1)= y^{m_k} U(y),
\qquad\text{with } U(0)\neq 0,
\]
we obtain
\[
I_0(S(y,1), xP_{d}(y,1)-R_{d}(y,1))
= m_k\, I_0(y, xP_{d}(0,1)-R_{d}(0,1))
= m_k.
\]
\end{proof}

As a consequence, if a degree $d$ foliation defined by a $1$-form $\omega$ has a singular point $p$ of multiplicity $d$, then
\[
\mu_{p}(\omega)= d^2+\sum_{M_k\mid (P_d \text{ and } Q_d)} m_k.
\]

A particular situation may occur. Assume that $r=d$, and consequently $m_k=1$ for every $k$. If any linear factor $M_k$ fails to divide both $P_d$ and $Q_d$, then there are $d+1$ singular points of multiplicity $1$. If all of them lie on a line $L$, then necessarily this line is invariant under $\F$. Even when $r<d$, it may happen that all these singular points lie on a line, and that this line is invariant. Observe that if such an invariant line $L$ exists, then, since it intersects every $M_k$, there must be a singular point of $\F$ on each $M_k$.

In this case, after a projective transformation, we may assume that $L$ is the line at infinity $x=0$. The foliation is then described by a homogeneous $1$-form
\[
\omega= P_d(y,z)\,dy+Q_d(y,z)\,dz,
\]
with
\[
S(y,z)=yP_d(y,z)+zQ_d(y,z)\neq 0.
\]
This foliation has $S(y,z)$ as an integrating factor, and therefore it admits a multivalued first integral. A detailed study of affine foliations defined by homogeneous polynomials can be found in \cite{cerveaumattei}. See also \cite{linsscardua} for a general description.

\section{Remarks on stability} \label{stability}

In this section, we state some partial results concerning the Hilbert--Mumford stability of the elements of $\mathcal{A}_{d,m}$. We begin by recalling the main definitions and some standard results, following \cite{newstead51lectures}.

Let $V\subset \P^{n}$ be a nonsingular complex projective variety, and let $G$ be a reductive group acting linearly on $V$.

Let $\lambda:\C^{*}\rightarrow G$ be a one-parameter subgroup ($1$-PS) of $G$. There is an induced morphism, which we denote again by $\lambda$, given by
\[
\begin{array}{rcl}
  \lambda:\C^{*} &\longrightarrow & GL_{n+1}(\C) \\
    t &\longmapsto & \begin{array}{ccc}
      \lambda(t):  \C^{n+1} &\longrightarrow & \C^{n+1}  \\
         \nu & \longmapsto & \lambda(t)\nu.
    \end{array}
\end{array}
\]

This morphism is a diagonal representation of the torus $\C^{*}$, and it always admits a diagonal description. More precisely, there exists a basis $\{\nu_{0},\ldots,\nu_{n}\}$ of $\C^{n+1}$ such that, for every $t\in\C^{*}$,
\[
\lambda(t)\nu_{i}=t^{r_{i}}\nu_{i}, \qquad r_{i}\in \Z.
\]
The number $r_{i}$ is called the {\bf weight} of $\nu_{i}$ with respect to the action of $\lambda$ on $\C^{n+1}$.

\begin{defn}
Let $x\in V\subset \P^{n}$ be a point, and let $\lambda$ be a $1$-PS of $G$. If $\overline{x}\in\C^{n+1}$ is a representative of the class $x$, and
\[
\overline{x}=\sum_{i=0}^{n}a_{i}\nu_{i},
\]
then
\[
\lambda(t)\overline{x}=\sum_{i=0}^{n} t^{r_{i}}a_{i}\nu_{i}.
\]
The {\bf Mumford function} is defined by
\[
\mu(x,\lambda):=\min\{r_{i}:a_{i}\neq 0\}.
\]
\end{defn}

The Hilbert--Mumford criterion, which is used to detect unstable points under a linear action, can be stated in the following numerical form.

\begin{thm}[Definition {\cite[Theorem 4.9]{newstead51lectures}}] \label{definicionEstabilidad}\hspace{0.3cm}
\begin{enumerate}
    \item A point $x$ is stable (respectively, semistable) if and only if $\mu(x,\lambda)<0$ (respectively, $\mu(x,\lambda)\leq 0$) for every $1$-PS $\lambda$ of $G$. The set of stable (respectively, semistable) points is denoted by $V^{s}$ (respectively, $V^{ss}$).
    
    \item A point $x$ is unstable if and only if there exists a $1$-PS $\lambda$ of $G$ such that $\mu(x,\lambda)>0$. The set of unstable points is denoted by $V^{un}$.
\end{enumerate}
\end{thm}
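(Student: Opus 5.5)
The plan is to reduce everything to orbit geometry in the affine cone $\C^{n+1}$ over $V$. I take the standard definitions: $x$ is \emph{semistable} if $0\notin\overline{G\cdot\overline{x}}$; it is \emph{stable} if, in addition, $G\cdot\overline{x}$ is closed and the stabilizer $G_{\overline{x}}$ is finite; it is \emph{unstable} otherwise. Statement (2) is then the negation of the semistable half of (1), so it suffices to prove (1). Each half of (1) has an easy direction, obtained from explicit limits along a $1$-PS, and a hard direction, which needs a $1$-PS witnessing a degeneration.

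\emph{Easy directions.} Let $\lambda$ be a $1$-PS and write $\overline{x}=\sum a_i\nu_i$ in a diagonalizing basis. If $\mu(x,\lambda)>0$, then every weight $r_i$ with $a_i\neq 0$ is positive. Hence $\lambda(t)\overline{x}=\sum t^{r_i}a_i\nu_i\to 0$ as $t\to 0$, so $0\in\overline{G\cdot\overline{x}}$ and $x$ is unstable. If $\mu(x,\lambda)=0$ for some nontrivial $\lambda$, then the limit $y=\lim_{t\to0}\lambda(t)\overline{x}$ exists and is nonzero, and $y$ is fixed by $\lambda$. There are two cases. If $y\notin G\cdot\overline{x}$, the orbit is not closed. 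If $y=g\overline{x}$, then $g^{-1}\lambda g$ fixes $\overline{x}$, so the stabilizer is infinite. In both cases $x$ is not stable.

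\emph{Hard direction, the fundamental lemma.} Suppose $y\in\overline{G\cdot\overline{x}}\setminus G\cdot\overline{x}$ lies in a closed orbit; this covers $y=0$. I must produce a $1$-PS $\lambda$ with $\lim_{t\to 0}\lambda(t)\overline{x}\in G\cdot y$, and in the case $y=0$ one with all relevant weights positive. I would argue in three steps.
\begin{enumerate}
\item Apply the curve selection lemma to the morphism $G\to\C^{n+1}$, $g\mapsto g\overline{x}$. This gives a formal arc $g(t)\in G(\C((t)))$ with $g(t)\overline{x}\in\C[[t]]^{n+1}$ and $g(t)\overline{x}|_{t=0}=y$.
\item Apply the Cartan (Iwahori) decomposition $G(\C((t)))=G(\C[[t]])\,\lambda(t)\,G(\C[[t]])$, valid since $G$ is reductive, to write $g(t)=k_1(t)\lambda(t)k_2(t)$, where $\lambda$ is a genuine $1$-PS of a maximal torus.
\item Absorb $k_1$, which is invertible over $\C[[t]]$, into the limit. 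Replace $\overline{x}$ by $k_2(0)\overline{x}$, which lies in the same orbit, and control the correction $k_2(t)k_2(0)^{-1}$. Comparing $t$-adic orders coordinatewise then shows that $\lambda(t)k_2(0)\overline{x}$ has a limit in $G\cdot y$, with positive minimal weight when $y=0$. Conjugating gives $\lambda'=k_2(0)^{-1}\lambda k_2(0)$, which satisfies $\mu(x,\lambda')>0$, or $\mu(x,\lambda')\ge 0$ with nonzero limit, respectively.
\end{enumerate}

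\emph{Concluding the stable case.} If the stabilizer is infinite, it contains a torus because $G_{\overline{x}}$ is reductive when the orbit is closed. Any $1$-PS $\lambda$ in that torus fixes $\overline{x}$, so $\mu(x,\lambda)=0$. Together with the fundamental lemma applied to a non-closed orbit, this proves: stable if and only if $\mu(x,\lambda)<0$ for every nontrivial $\lambda$.

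\emph{Main obstacle.} I expect step (3) of the fundamental lemma to be the main obstacle: passing from the formal arc to a genuine $1$-PS while controlling the error terms of $k_2(t)$. I would first try Kempf's (or Mumford's) order-of-vanishing argument: bound $\mathrm{ord}_t$ of each coordinate of $\lambda(t)k_2(t)\overline{x}$ below by that of $\lambda(t)k_2(0)\overline{x}$ minus terms of higher order. In the paper itself I would simply cite \cite[Theorem 4.9]{newstead51lectures} for this step.
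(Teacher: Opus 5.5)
The paper gives no proof of this statement. It imports the Hilbert--Mumford criterion from Newstead's lectures and even labels it a definition, so your outline is the standard Mumford argument that the citation stands in for. You are right to restrict the stable case to nontrivial $\lambda$, since the trivial 1-PS has $\mu=0$.

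The following parts are sound:
\begin{enumerate}
\item the easy directions;
\item the reduction of (2) to (1);
\item the reductivity-of-stabilizers (Matsushima) step for closed orbits with infinite stabilizer;
\item the case $y=0$ of step (3).
\end{enumerate}
For the case $y=0$, write $k_2(t)\overline{x}=\sum_{k\ge 0}t^k v_k$ with $v_0=k_2(0)\overline{x}$. The weight-$r$ component of $\lambda(t)k_2(t)\overline{x}$ is $t^r\sum_k t^k (v_k)_r$. It has $t$-order exactly $r$ whenever $(v_0)_r\neq 0$, so lying in $t\,\C[[t]]$ forces $r\geq 1$.

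\textbf{The gap: step (3) for $y\neq 0$.} Coordinatewise comparison of orders does not give what you claim there. It only shows $(v_k)_r=0$ for $k<-r$, so $\lim_{t\to 0}\lambda(t)k_2(0)\overline{x}=(v_0)_0$ exists. But $\lim_{t\to 0}\lambda(t)k_2(t)\overline{x}=\sum_{r\le 0}(v_{-r})_r$ also picks up the higher-order terms $v_k$ in negative weights. Nothing in the comparison places $(v_0)_0$ in $G\cdot y$; this is exactly the ``correction'' $k_2(t)k_2(0)^{-1}$ you could not control.

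There are two ways out.
\begin{enumerate}
\item \emph{You do not need the stronger claim.} For the stable half you only need some nontrivial $\lambda'$ with $\mu(x,\lambda')\ge 0$ when $G\cdot\overline{x}$ is not closed. Existence of the limit gives $\mu\ge 0$. Moreover $\lambda$ is automatically nontrivial: otherwise $g(t)=k_1(t)k_2(t)\in G(\C[[t]])$, so $y=g(0)\overline{x}\in G\cdot\overline{x}$, contrary to hypothesis.
\item \emph{If you want the stronger lemma,} take generators of the ideal of the closed $G$-stable set $G\cdot y$ spanning a finite-dimensional $G$-stable subspace. This gives a polynomial map $F:\C^{n+1}\to W$, equivariant for a representation $\rho$ of $G$, with $F^{-1}(0)=G\cdot y$. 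Then $\rho(\lambda(t))\rho(k_2(t))F(\overline{x})=F(\lambda(t)k_2(t)\overline{x})$ specializes to $F(k_1(0)^{-1}y)=0$. The $y=0$ argument, run in $W$, gives $F(\lambda(t)k_2(0)\overline{x})\to 0$. Hence the limit $z$ of $\lambda(t)k_2(0)\overline{x}$ satisfies $F(z)=0$, i.e.\ $z\in G\cdot y$.
\end{enumerate}
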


If we consider the open subset of semistable points $V^{ss}$ and restrict the action of $G$ to $V^{ss}$, then there exists a good quotient for this action. For example, in the case of foliations, Alcántara \cite{AlcantaraGrado1} showed that the good quotient for foliations of degree $1$ is
\[
\F(1,2)//SL_{3}(\C)=\P^{1}.
\]
Moreover, Kirwan showed that the unstable points contain information about the quotient variety \cite{Kirwan}, which is one of the reasons why their study is important.

Since $\F(d;2)$ is a projective variety of dimension $d^{2}+4d+2$, we may consider the action of $SL_{3}(\C)$ on $\F(d;2)$ induced by coordinate changes:
\[
\begin{array}{rcl}
  SL_{3}(\C)\times \F(d;2)&\rightarrow & \F(d;2) \\
    (g, \chi)&\mapsto & g\cdot \chi=Dg\,\chi(g^{-1}).
\end{array}
\]

It is well known that every one-parameter subgroup of $SL_{3}(\C)$ can be written as
\[
g\lambda(t)g^{-1}
=
g
\begin{pmatrix}
t^{k_{1}}&0&0\\
0&t^{k_{2}}&0\\
0&0&t^{k_{3}}
\end{pmatrix}
g^{-1},
\]
for some $g\in SL_{3}(\C)$, where $k_{1}\geq k_{2}\geq k_{3}$ and $k_{1}+k_{2}+k_{3}=0$.

We denote by $\lambda_{(k_{1},k_{2})}$ the diagonal $1$-PS
\[
\begin{pmatrix}
t^{k_{1}}&0&0\\
0&t^{k_{2}}&0\\
0&0&t^{k_{3}}
\end{pmatrix},
\]
and we assume that the integers $k_{1},k_{2},k_{3}$ are relatively prime.

Moreover, the action by coordinate changes interacts well with the invariants of interest: the Milnor number and the algebraic multiplicity of a singularity are invariant under this linear action.

Consider the following set of generators of $\F(d;2)$:
\[
\left\{ \mathcal{X}_{i_{0},j_{0}}^{0,d},\mathcal{ X}_{i_{1},j_{1}}^{1,d},\mathcal{X}_{i_{2},j_{2}}^{2,d}~\middle|~ j_{l}\geq i_{l}\ \forall\, l=0,1,2,\ \text{and } j_{l},i_{l}\in \{0,\ldots,d\} \right\},
\]
where
\[
    \mathcal{X}_{i_{0},j_{0}}^{0,d}= x^{d-j_{0}}y^{j_{0}-i_{0}}z^{i_{0}}\frac{\partial}{\partial x},\qquad
    \mathcal{X}_{i_{1},j_{1}}^{1,d}= x^{d-j_{1}}y^{j_{1}-i_{1}}z^{i_{1}}\frac{\partial}{\partial y},\qquad
    \mathcal{X}_{i_{2},j_{2}}^{2,d}= x^{d-j_{2}}y^{j_{2}-i_{2}}z^{i_{2}}\frac{\partial}{\partial z}.
\]

We may then consider a decomposition of the space into one-dimensional scalar subrepresentations of $\C^{d^2+4d+3}$ as
\[
\begin{split}
\C^{d^2+4d+3} =
&\bigoplus_{0\leq i_{0}\leq j_{0}\leq d}
a^{1-d-j_{0}}b^{i_{0}-j_{0}}c^{-i_{0}}\langle \mathcal{X}_{i_{0},j_{0}}^{0,d}\rangle \\
&\bigoplus_{0\leq i_{1}\leq j_{1}\leq d}
a^{j_{1}-d}b^{j_{1}-i_{1}+1}c^{-i_{1}}\langle \mathcal{X}_{i_{1},j_{1}}^{1,d}\rangle \\
&\bigoplus_{0\leq i_{2}\leq j_{2}\leq d}
a^{j_{2}-d}b^{i_{2}-j_{2}}c\langle \mathcal{X}_{i_{2},j_{2}}^{2,d}\rangle.
\end{split}
\]

We can associate with this representation a set of points in $\R^2$, called the \textit{weight diagram}, given by
\[
\left\{
-L_{1}(d-j_{l}-i_{l})-L_{2}(j_{l}-2i_{l})+L_{l+1}
\;\middle|\;
l=0,1,2
\right\}\subset \R^2,
\]
where
\[
L_1= (1,0), \qquad
L_2=\left(-\frac{1}{2},\frac{\sqrt{3}}{2}\right), \qquad
L_3=\left(-\frac{1}{2},-\frac{\sqrt{3}}{2}\right).
\]

It is well known that the numerical criterion in Theorem \ref{definicionEstabilidad} can be interpreted in terms of convexity properties of the weight diagram of the representation. More precisely, let $v=[v_0:\dots:v_n]\in V\subset \P^n$. Then:
\begin{itemize}
    \item The point $v$ is semistable if $0\in Conv\{\alpha_i\mid v_i\neq 0\}$.
    \item The point $v$ is stable if $0$ lies in the interior of $Conv\{\alpha_i\mid v_i\neq 0\}$.
    \item The point $v$ is unstable if $0\notin Conv\{\alpha_i\mid v_i\neq 0\}$.
\end{itemize}

Following results of C. Alcántara \cite{A10U}; see also \cite{R26}; a degree $d$ foliation with a singular point of algebraic multiplicity $d$ is unstable. In particular,
\[
\mathcal{A}_{d,d}\subset \F(d;2)^{un}.
\]

Intermediate cases are more difficult to analyze. As mentioned in the Introduction, it follows from \cite{cerveaudeserti} that every projective foliation is birational to a foliation having a unique singular point. A convenient birational transformation can be constructed using Cremona transformations. Nevertheless, as the following examples show, stability is not invariant under this type of transformation.

\begin{ex} \label{ex:Cremona}
    If we consider the foliation $\omega=Q_{d}dz+R_{d}(ydz-zdy)$ with $z\nmid Q_{d}$, without loss of generality, we can assume that $C(y^{d},Q_{d})=-1$. Then, $S(y,z)=zQ_{d}$. Assuming $z\nmid R_{d}$, by Lemma \ref{Lemma:Mk-noFactor},  $Sing(\omega)=\{p=[1:0:0],q=[a:1:0]\}$ where $a=C(y^{d},R_{d})\neq 0$. Moreover, by Lemma \ref{Lemma:Mk-noFactor}, $\mu_{p}(\omega)=d^{2}+d$ and $\mu_{q}(\omega)=1$.

    Consider the Cremona transformation given by \[\rho[x:y:z]=[xz:x^{2}-yz:z^{2}].\] This transformation has as indeterminacy set $Ind(\rho)=\{[0:1:0]\}$ and it has as exceptional set the curve $z=0$, $Exc(\rho)=\{ (z=0)\}$, thus $\rho(p)=\rho(q)=[0:1:0]=: q'$.
Now, 
  \[\rho^{\ast}\omega =(-z^{3}\tilde{Q}_{d}-2xz^{2}\tilde{R}_{d})dx+z^{3}\tilde{R}_{d}dy+(xz^{2}\tilde{Q}_{d}-yz^{2}\tilde{R}_{d}+2x^{2}z\tilde{R}_{d})dz,\]
   where $\tilde{Q}_{d}=Q_{d}(x^{2}-yz,z^{2})$ and $\tilde{R}_{d}=R_{d}(x^{2}-yz,z^{2})$. Since $z\nmid \tilde{R}_{d}$, we consider the foliation   defined by
\[\tilde{\omega}= (-z^{2}\tilde{Q}_{d}-2xz\tilde{R}_{d})dx+z^{2}\tilde{R}_{d}dy+(xz\tilde{Q}_{d}-yz\tilde{R}_{d}+2x^{2}\tilde{R}_{d})dz,\]
having $q'$ as only singularity. Since $deg(\tilde{R}_{d})=2d$, and  $deg(\tilde{Q}_{d})=2d$, it is a foliation of degree $2d+1$. 

Locally, in the open chart $y=1$ of the complex projective plane, the foliation  is given by
\begin{align*}
\tilde{\omega}|_{(y=1)}=&(-z^{2}\tilde{Q}_{d}(x,1,z)-2xz\tilde{R}_{d}(x,1,z))dx+\\ &(xz\tilde{Q}_{d}(x,1,z)-z\tilde{R}_{d}(x,1,z)+2x^{2}\tilde{R}_{d}(x,1,z))dz,\end{align*}
and the origin has multiplicity $d+1$, as $z^{d+1} dz$ appears explicitly in above expression.

So, denoting $s=2d+1$, we have constructed a family of  elements of  $\mathcal{A}_{s,\frac{s+1}{2}}$. 
\end{ex}

Following Example \ref{ex:Cremona}, consider the case $s=3$.  We can describe  part of the set $\mathcal{A}_{3,2}$, in addition to $\mathcal{A}_{3,3}$.  In the \cite{castorena2024git,pantaleon16problema}, the authors give explicit examples  in which a foliation of degree $3$ with a unique singular point of algebraic multiplicity $2$ can be stable, semistable not-stable, or unstable. In this example, if we consider a foliation of degree $1$ which has two singular points, without loss generality,  this foliation is the form $\omega_{1}= Q_{1}dz+R_{1}(ydz-zdy)$, where
$Q_{1}=-y$ and $R_{1}=ay+r_{2}z$, with  $r_{2}\in\C$ and $a\in\C^{*}$. Note that $\omega_{1}$ is given by the vector field $\XX_{1}=\begin{pmatrix}
    0 & -a & r_{0} \\ 0 & -1 & 0 \\ 0 & 0& 0
\end{pmatrix}\begin{pmatrix}
    x\\y\\z
\end{pmatrix}$. Since the matrix $\begin{pmatrix}
    0 & -a & r_{0} \\ 0 & -1 & 0 \\ 0 & 0& 0
\end{pmatrix}$ has two different eigenvalues,  the foliation $\omega_{1}$ is semistable  (see \cite{AlcantaraGrado1}).  After applying Cremona transformation $\rho$, we have the $1$-form \begin{align*}
   \tilde{\omega}=(-z^{2}\tilde{Q}_{1}-2xz\tilde{R}_{1})dx+z^{2}\tilde{R}_{1}dy+(xz\tilde{Q}_{1}-yz\tilde{R}_{1}+2x^{2}\tilde{R}_{1})dz,
\end{align*}
where $\tilde{Q}_{1}=-x^{2}+yz$, and $\tilde{R}_{1}=ax^{2}-ayz+r_{2}z^{2}$.

If  we consider a homogeneous vector field associated by the foliation of $1$-form  $\tilde{\omega}$ given by 

\begin{align*}
    \mathcal{X}=z\tilde{R}_{1} \frac{\partial}{\partial x}+ (z\tilde{Q}_{1}+2x\tilde{R}_{1})\frac{\partial}{\partial y}=\begin{pmatrix}
        ax^{2}z-ayz^{2}+r_{2}z^{3}\\
        2ax^{3}-x^{2}z-2axyz+2r_{2}xz^{2}+yz^{2}\\
        0
    \end{pmatrix}.
\end{align*}

We can see that $\mathcal{X}$ defines a unstable foliation, see for example the weight diagram of foliation of degree $3$ in \cite{pantaleon16problema}, and the curve $\{x^{2}-yz=0\}$ is invariant.

\section{Acknowledgments}
The second author gratefully acknowledges the hospitality and support of Universidad de Valladolid during a research stay, where this work was completed.

 \bibliographystyle{plain}
 \bibliography{Bibliografia}
        
\end{document}